\title{On Products of Shifts in Arbitrary Fields}
\newcommand{\F}{\mathbb{F}}
\newcommand{\Z}{\mathbb{Z}}
\newcommand{\R}{\mathbb{R}}
\author{Audie Warren}
\newtheorem{lem}{Lemma}
\newtheorem{cor}{Corollary}
\newtheorem{thm}{Theorem}
\newtheorem{conj}{Conjecture}
\begin{document}
 \maketitle
 
 \begin{abstract}
     We adapt the approach of Rudnev, Shakan, and Shkredov presented in \cite{ShShRu} to prove that in an arbitrary field $\F$, for all $A \subset \F$ finite with $|A| < p^{1/4}$ if $p:= Char(\F)$ is positive, we have
$$|A(A+1)| \gtrsim |A|^{11/9}, \qquad |AA| + |(A+1)(A+1)| \gtrsim |A|^{11/9}.$$
This improves upon the exponent of $6/5$ given by an incidence theorem of Stevens and de Zeeuw.
 \end{abstract}
 \section{Introduction and Main Result}
For finite $A \subseteq \F$, we define the \textit{sumset} and \textit{product set} of $A$ as 
$$A+A = \{a + b : a,b \in A \}, \quad  AA = \{ ab : a,b\in A \}.$$ It is an active area of research to show that one of these sets must be large relative to $A$. The central conjecture in this area is the following.
\begin{conj}[Erd\H{o}s - Szemer\'{e}di]\label{ESconj}
For all $\epsilon > 0$, and for all $A \subseteq \Z$ finite, we have $$|AA| + |A+A| \gg |A|^{2-\epsilon}.$$
\end{conj}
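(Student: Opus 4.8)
\textbf{The plan} is to derive both estimates, by elementary Cauchy--Schwarz manipulations, from one upper bound on a product of multiplicative energies, and then to prove that bound through the incidence geometry of points and planes --- Rudnev's theorem in positive characteristic, and the Szemer\'edi--Trotter bound (or its complex analogue) in characteristic zero --- adapting the argument of Rudnev, Shakan and Shkredov in \cite{ShShRu}. We may assume $0,-1\notin A$, since deleting these elements changes every quantity below only in lower order. Write $\mathsf E(B)=\#\{(b_1,b_2,b_3,b_4)\in B^4:\ b_1b_2=b_3b_4\}$ for the multiplicative energy of $B$ and $r_{B/B}(\lambda)=\#\{(b,b')\in B^2:\ b=\lambda b'\}$, so that $\sum_\lambda r_{B/B}(\lambda)=|B|^2$ and $\sum_\lambda r_{B/B}(\lambda)^2=\mathsf E(B)$. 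For the first estimate, Cauchy--Schwarz gives $|A(A+1)|\ge|A|^4/E$, where $E=\#\{(a,b,c,d)\in A^4:\ a(b+1)=c(d+1)\}$; rewriting $a(b+1)=c(d+1)$ as $a/c=(d+1)/(b+1)$ identifies $E$ with the cross--energy $\sum_\lambda r_{A/A}(\lambda)\,r_{(A+1)/(A+1)}(\lambda)$, and a second Cauchy--Schwarz bounds this by $\bigl(\mathsf E(A)\,\mathsf E(A+1)\bigr)^{1/2}$. For the second estimate, $\mathsf E(B)\ge|B|^4/|BB|$, so with $X:=\max\bigl(|AA|,|(A+1)(A+1)|\bigr)$ we have $\mathsf E(A)\,\mathsf E(A+1)\ge|A|^8/X^2$. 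Hence both estimates follow, up to the suppressed logarithms, from the single inequality
\[
\mathsf E(A)\cdot\mathsf E(A+1)\ \lesssim\ |A|^{50/9},
\]
which holds unconditionally in characteristic zero and, in positive characteristic, under the hypothesis $|A|<p^{1/4}$: it gives $|A(A+1)|\gtrsim|A|^{4-25/9}=|A|^{11/9}$ and $X\gtrsim|A|^{(8-50/9)/2}=|A|^{11/9}$.

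\textbf{Proving the energy bound.} The heuristic is that $\mathsf E(A)$ is large only if $A$ is multiplicatively structured, and $\mathsf E(A+1)$ is large only if its additive shift $A+1$ is multiplicatively structured, and a set cannot be highly multiplicatively structured together with a nontrivial additive shift of itself. To make this quantitative I would pass to a third--moment (collinear--triple) count: for $\mu$ ranging over common ratios, the number of representations $\mu=a_1/a_2=(b_1+1)/(b_2+1)$ with $a_i,b_i\in A$ becomes, after clearing denominators, an incidence count between a set of points and a set of planes in $\F^3$ whose coordinates record the multiplicative data of $A$ on one side and the shifted data of $A+1$ on the other. The incidence bound controls this count by an expected main term plus a term governed by the largest number of collinear points in the point set; a crude first estimate of the latter, reinserted (a short bootstrap), yields control of weighted moments such as $\sum_\mu r_{A/A}(\mu)^2\,r_{(A+1)/(A+1)}(\mu)$ and its symmetric partner, after which a chain of H\"older inequalities relating these moments to $\mathsf E(A)$ and $\mathsf E(A+1)$, optimised over the exponents, delivers $\mathsf E(A)\,\mathsf E(A+1)\lesssim|A|^{50/9}$. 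The value $50/9$ --- equivalently $25/9$ per energy, so that $4-25/9=11/9$ --- is exactly the output of this optimisation; it improves on the bound $|A|^{14/5}$ in place of the cross--energy that the weaker Stevens--de Zeeuw incidence theorem supplies, which corresponds to the exponent $6/5$.

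\textbf{Where the hypotheses enter, and the main obstacle.} In characteristic zero the incidence step needs no restriction on $A$. In positive characteristic Rudnev's bound is effective only so long as the relevant point and plane sets have size $o(p^2)$ and the target quantities stay below $\sqrt p$; the hypothesis $|A|<p^{1/4}$ is precisely what keeps $|A|^{11/9}$ --- and all the product sets and incidence configurations that arise --- within that range, so that the characteristic--$p$ error terms are dominated and one may argue as over $\R$. The step I expect to be hardest is the design of the incidence configuration together with the collinear--points bootstrap: one must embed the governing equations into $\F^3$ so that the degenerate contributions --- collinear points, coincident planes, and solutions coming from trivial identities such as $a_1=a_2$ --- are genuinely of lower order, and so that what survives is a moment to which H\"older applies \emph{without} the loss of a whole power of $|A|$. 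Securing this clean separation, which is exactly the refinement of the method over the plain second--energy estimate underlying the Stevens--de Zeeuw exponent $6/5$, is the delicate core of the argument.
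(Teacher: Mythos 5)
There is a fundamental mismatch: the statement you were asked to prove is Conjecture \ref{ESconj}, the Erd\H{o}s--Szemer\'edi conjecture, which asserts $|AA|+|A+A|\gg|A|^{2-\epsilon}$ for every $\epsilon>0$ and every finite $A\subseteq\Z$. This is a famous open problem; the paper states it purely as motivation and offers no proof (no proof exists). Your proposal does not prove it either: everything you sketch is aimed at the exponent $11/9$ for $|A(A+1)|$ and for $|AA|+|(A+1)(A+1)|$ over an arbitrary field with $|A|<p^{1/4}$ --- that is, at the paper's Corollary \ref{A(A+1)} --- which is a vastly weaker conclusion than $2-\epsilon$ for all $\epsilon$, and concerns different quantities (shifted product sets, no sumset) in a different setting (arbitrary $\F$ rather than $\Z$). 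No bound of the shape you derive, $|A(A+1)|\gtrsim|A|^{4}/\bigl(E^*(A)E^*(A+1)\bigr)^{1/2}\gtrsim|A|^{11/9}$, can be parlayed into the conjecture; indeed the best known exponent even over $\R$ for $|AA|+|A+A|$ is only $4/3+c$ for a small $c$. So as a proof of the stated conjecture the proposal fails at the outset, not at a technical step.

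Even judged as a blind reconstruction of the paper's actual theorem (Corollary \ref{A(A+1)}), the central inequality you hinge everything on, $E^*(A)\cdot E^*(A+1)\lesssim|A|^{50/9}$, is strictly stronger than what the paper proves and is not substantiated by your sketch: the second-moment multiplicative energy $E^*(A)$ can be of order $|A|^3$ (e.g.\ for a geometric progression), so your claim would force $E^*(A+1)\lesssim|A|^{23/9}$, which does not follow from the heuristic ``incidence count plus a chain of H\"older inequalities'' you describe. The paper deliberately avoids bounding full second-moment energies; instead it bounds fourth-order energies $E_4^*$ and the $E^*_{4/3}$ energy of popular subsets (via the Stevens--de Zeeuw incidence theorem, Lemma \ref{4/3decomp}, and an equivalence-class/pigeonholing argument), assembling these into the asymmetric inequality of Theorem \ref{shift}, from which the $11/9$ exponents follow by specialising $B,C,D$. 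If you want to recover the paper's result, you need that refined decomposition rather than a clean product-of-energies bound.
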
 The notation $A \ll B$ is used to hide absolute constants, and in addition the notation $A \lesssim B$ is used to hide constant factors and factors of $\log|A|$, i.e. $A \lesssim B$ if and only if there exist absolute constants $c >0$ and $d$ such that $A \leq c B(\log|A|)^d$. If $A \lesssim B$ and $B \lesssim A$ we write $A \sim B$.
Although Conjecture \ref{ESconj} is stated over the integers it can be considered over fields, the real numbers being of primary interest. Current progress over $\R$ places us at an exponent of $\frac{4}{3} + c$ for some small $c$, due to Shakan \cite{Shak}, building on works of Konyagin and Shkredov \cite{KonShk} and Solymosi \cite{Soly}. Incidence geometry, and in particular the Szemer\'{e}di-Trotter Theorem, are the tools used to prove such results in the real numbers.

Conjecture \ref{ESconj} can also be considered over arbitrary fields $\F$. We will let $p$ denote the characteristic of $\F$ throughout. Due to the possible existence of subfields in $\F$, extra restrictions on $|A|$ relative to $p$ must be imposed if $p >0$. All such conditions can be ignored if $p=0$. Over arbitrary fields we replace the Szemer\'{e}di-Trotter Theorem with a point-plane incidence theorem of Rudnev \cite{RuPoPl}, which was used by Stevens and de Zeeuw to derive a point-line incidence theorem \cite{SDZ}. The exponent of $6/5$ was proved in 2014 by Roche-Newton, Rudnev, and Shkredov \cite{ORS}. An application of the Stevens - de Zeeuw Theorem also gives this exponent of $6/5$ for Conjecture \ref{ESconj}, so that $6/5$ became a threshold to be broken.

The $6/5$ threshold has recently been broken, see \cite{ShSh}, \cite{ShShRu}, and \cite{ChKeMo}. The following theorem was proved in \cite{ShShRu} by Rudnev, Shakan, and Shkredov, and is the current state of the art bound.
\begin{thm} {\normalfont \cite{ShShRu}} \label{sp}
Let $A \subset \F$ be finite with $|A| < p^{18/35}$. Then
$$ |A+A| + |AA| \gtrsim |A|^{11/9}.$$
\end{thm}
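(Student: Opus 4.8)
The plan is to run the incidence-geometric machinery that produces the $6/5$ exponent and then recover, through a higher-order energy, the information that argument discards; this is the strategy of \cite{ShShRu}. Write $N=|A|$, $M=|AA|$, $Q=|A+A|$, and assume $M\le Q\le N^{2}$, since otherwise there is nothing to prove. Recall first the Elekes--Stevens--de Zeeuw bound, which already gives $\max\{M,Q\}\gtrsim N^{6/5}$: take the point set $P=AA\times(A+A)$, a Cartesian product of sets of sizes $M$ and $Q$, and the family $L$ of $N^{2}$ lines $\ell_{a,b}\colon y=x/a+b$ indexed by $(a,b)\in A\times A$; since $\ell_{a,b}$ contains the $N$ points $(ac,\,b+c)$, $c\in A$, one has $I(P,L)\ge N^{3}$, and feeding this into the Stevens--de Zeeuw incidence theorem \cite{SDZ}, of the form $I(P,L)\ll M^{3/4}Q^{1/2}|L|^{3/4}+|L|+|P|$ subject to a size restriction relative to $p$, rearranges to $Q^{5}\gtrsim N^{6}$. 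The inefficiency is visible: this uses only that each of the $N^{2}$ lines is $N$-rich, whereas the incidences cannot be this evenly spread, and the improvement comes from quantifying that.

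To exploit the non-uniformity I would work with the third multiplicative energy $E_{3}^{\times}(A)=\sum_{\lambda}r_{A/A}(\lambda)^{3}$, where $r_{A/A}(\lambda)=|\{(a,b)\in A^{2}:a=\lambda b\}|=|(A\times A)\cap\{y=\lambda x\}|$, together with the ordinary multiplicative energy $E^{\times}(A)=\sum_{\lambda}r_{A/A}(\lambda)^{2}$, which satisfies $E^{\times}(A)\ge N^{4}/M$. Two soft inequalities bracket the argument. On one side, H\"older with $\sum_{\lambda}r_{A/A}(\lambda)=N^{2}$ gives $E^{\times}(A)\le N\,E_{3}^{\times}(A)^{1/2}$, so $E_{3}^{\times}(A)\ge N^{6}/M^{2}$. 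On the other side I want an upper bound of the shape $E_{3}^{\times}(A)\lesssim Q^{\,s}N^{\,t}$, with constants $s,t$ coming out of the incidence step below; comparing, $N^{6}/M^{2}\lesssim Q^{s}N^{t}$ yields $M^{2}Q^{s}\gtrsim N^{6-t}$ and hence $\max\{M,Q\}\gtrsim N^{(6-t)/(2+s)}$, so one only needs the incidence step to deliver exponents with $(6-t)/(2+s)=\tfrac{11}{9}$ (e.g.\ $s=2$, $t=\tfrac{10}{9}$, which is a nontrivial estimate precisely in the range $Q\lesssim N^{13/9}$, comfortably below the target). An equivalent route runs on the additive side: bound $E_{3}^{+}(A)$ and convert it into a lower bound on $|A+A|$ via the Konyagin--Shkredov operator method \cite{KonShk}.

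The crux is the upper bound $E_{3}^{\times}(A)\lesssim Q^{s}N^{t}$, equivalently a bound, for each dyadic $\tau$, on how many origin-lines are $\tau$-rich in $A\times A$, but sensitive to the additive size $Q$ --- a plain Stevens--de Zeeuw count on $A\times A$ alone only reproduces the trivial $E_{3}^{\times}(A)\le N^{4}$, so $A+A$ must enter the configuration. Here I would dyadically localise to a popular level set $P_{\Delta}=\{\lambda:\Delta\le r_{A/A}(\lambda)<2\Delta\}$ carrying the bulk of $E_{3}^{\times}(A)$, translate its rich structure into a \emph{genuine} Cartesian-product point set together with a line family built from $A$ and $A+A$, whose incidence count is forced to be large by the $\Delta$ representations available at each $\lambda\in P_{\Delta}$, match this against Stevens--de Zeeuw, solve for $\Delta^{3}|P_{\Delta}|$, and sum over the $O(\log N)$ scales. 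The main obstacle --- where essentially all the work is --- is precisely this construction: one must arrange that the localised point set is an honest product of two sets of \emph{balanced} size, so that the Stevens--de Zeeuw exponents $\tfrac34,\tfrac12,\tfrac34$ actually bite rather than collapsing to the trivial bound, while keeping every pigeonholing and popularity loss to within a $\log N$ factor. A final, unavoidable piece of bookkeeping is that each use of Stevens--de Zeeuw is valid only when the point and line sets are not too large relative to $p$; propagating these constraints through the incidence estimates and optimising them against the energy exponents is exactly what forces the hypothesis $|A|<p^{18/35}$.
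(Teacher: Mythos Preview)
This theorem is not proved in the present paper---it is quoted from \cite{ShShRu}---but the paper's own Theorem~\ref{shift} is explicitly the multiplicative analogue of the \cite{ShShRu} argument, so one can read off what the method actually is, and it is \emph{not} the one you sketch. You propose to bound $E_3^{\times}(A)$ from above by $Q^{s}N^{t}$ via a single Stevens--de~Zeeuw incidence count on a yet-to-be-designed Cartesian product; you then candidly say that designing this configuration ``is where essentially all the work is'' and leave it open. That is the gap: there is no known single-step incidence construction that yields an $E_3$ bound strong enough for $11/9$, and \cite{ShShRu} does not proceed this way. Your outline, as written, is the skeleton of the $6/5$ argument with a placeholder where the new idea should go.

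The actual \cite{ShShRu} machinery, mirrored line-for-line in the proof of Theorem~\ref{shift} here, pivots on the \emph{$4/3$} energy rather than the third energy. One first passes to a large subset on which $E_{4/3}^{*}(A')\gg E_{4/3}^{*}(A)$ (Lemma~\ref{4/3decomp}), then counts solutions to a trivial identity $a/a'=ab/a'b=ab'/a'b'$ restricted to a popular product set $P$ and a popular ratio set $Q$. Cauchy--Schwarz against the equivalence classes of this system splits the count into (i) a mixed fourth-moment term $\sum_x r_{A/A}(x)^2 r_{B/B}(x)^2\le E_4^{*}(A)^{1/2}E_4^{*}(B)^{1/2}$ and (ii) the number $|X|$ of classes, which after further dyadic pigeonholing is controlled by products of $E_4^{*}$ and $E^{*}$ energies of $B$ against auxiliary sets. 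Each of these energies is then bounded by a separate Stevens--de~Zeeuw application (Theorems~\ref{E4} and~\ref{e2bound} here), and it is the interplay of the $E_4$ and $E_2$ bounds, fed back through the $E_{4/3}$ normalisation, that produces the exponent $11/9$. None of this structure---the $4/3$-energy reduction, the equivalence-class Cauchy--Schwarz, or the paired $E_4$/$E_2$ incidence bounds---appears in your plan, and these are precisely the ideas that lift the exponent past $6/5$.
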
 
 
Another way of considering the sum-product phenomenon is to consider the set $A(A+1)$, which we would expect to be quadratic in size. This encapsulates the idea that a translation of a multiplicatively structured set should destroy its structure, which is a main theme in sum-product questions. Study of growth of $|A(A+1)|$ began in \cite{GS} by Garaev and Shen, see also \cite{OlTJ}, \cite{DZ}, and \cite{AM}. Current progress for $|A(A+1)|$ comes from an application of the Stevens - de Zeeuw Theorem, giving the same exponent of $6/5$. In this paper we use the multiplicative analogue of ideas in \cite{ShShRu} to prove the following theorem. 
\begin{thm}\label{shift}
Let $A, B, C, D \subset \F$ be finite with the conditions $$|C(A+1)||A| \leq |C|^3, \ \ |C(A+1)|^2 \leq |A||C|^3, \ \ |B| \leq |D|, \ \  |A|, |B|, |C|,|D| < p^{1/4}.$$ Then we have
$$ |AB|^8 |C(A+1)|^2 |D(B - 1)|^8 \gtrsim |B|^{13}|A|^5 |C|^3 |D|.$$
\end{thm}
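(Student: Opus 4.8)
The plan is to adapt the point--plane incidence argument of Rudnev, Shakan and Shkredov from \cite{ShShRu} to the multiplicative variables, with Rudnev's point--plane theorem \cite{RuPoPl} playing the role of Szemer\'edi--Trotter. Write $K=|AB|$, $L=|C(A+1)|$, $M=|D(B-1)|$, and (discarding a bounded number of elements, which only weakens the hypotheses and conclusion by constant factors) assume all the sets avoid $0$ and that none of the denominators below vanishes. The quantity I would estimate in two ways is
$$\cN:=\#\big\{(a,a',b,b',c,c',d,d')\in A^2\times B^2\times C^2\times D^2:\ ab=a'b',\ c(a+1)=c'(a'+1),\ d(b-1)=d'(b'-1)\big\}.$$
For the lower bound, observe that $(a,b,c,d)\mapsto\big(c,\,ab,\,c(a+1),\,d(b-1)\big)$ is injective from $A\times B\times C\times D$ into $C\times AB\times C(A+1)\times D(B-1)$, since a point $(c,q,r,s)$ of the image determines $a=r/c-1$, $b=cq/(r-c)$, $d=s(r-c)/(cq-r+c)$. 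Writing $h(q,r,s)$ for the number of $c\in C$ with $(c,q,r,s)$ in the image, this gives $\sum_{q,r,s}h(q,r,s)=|A||B||C||D|$, while grouping $\cN$ by the common values $q=ab=a'b'$, $r=c(a+1)=c'(a'+1)$, $s=d(b-1)=d'(b'-1)$ gives $\cN=\sum_{q,r,s}h(q,r,s)^2$. As $h$ is supported on a set of size at most $KLM$, Cauchy--Schwarz gives $\cN\ \geq\ |A|^2|B|^2|C|^2|D|^2/(KLM)$.

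The upper bound for $\cN$ is the heart of the matter. Following the template of the sum--product argument in \cite{ShShRu}, I would use two of the three defining equations to eliminate two of the eight variables, reducing the system to a single relation of ``bilinear $=$ bilinear'' shape that can be read as a point--plane incidence: the points form a two-parameter surface built from $A$, $C$ and one of $AB$, $C(A+1)$, $D(B-1)$, the planes are indexed by the remaining variables with multiplicities recorded by the representation functions of the other two product sets, and incidences biject with the solutions counted by $\cN$. The hypothesis $|A|,|B|,|C|,|D|<p^{1/4}$ makes the point set $P$ of size $\ll p^2$, so Rudnev's theorem applies and gives $\cN\lesssim|P|^{1/2}|\Pi|+k|\Pi|$, where $k$ is the largest number of collinear points of $P$; the hypothesis $|B|\le|D|$ and the two upper bounds on $L$ will be used below.

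A line carrying many points of $P$ would force an unexpected additive-multiplicative coincidence among the elements of $A$ and $C$; bounding $k$ in this way, the conditions $|C(A+1)||A|\le|C|^3$ and $|C(A+1)|^2\le|A||C|^3$ are exactly what is needed to ensure that the degenerate term $k|\Pi|$ never exceeds the main term $|P|^{1/2}|\Pi|$. Discarding it, combining $\cN\lesssim|P|^{1/2}|\Pi|$ with the lower bound on $\cN$, and then performing the further Cauchy--Schwarz manipulations that convert the modest powers produced by Rudnev's theorem into the stated powers on $AB$, $C(A+1)$, $D(B-1)$, one is led after rearrangement to $|AB|^8|C(A+1)|^2|D(B-1)|^8\gtrsim|B|^{13}|A|^5|C|^3|D|$.

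The main obstacle is the construction in the second paragraph: one must choose which pair of equations to eliminate with so that incidences correspond bijectively to the solutions counted by $\cN$, so that $P$ is genuinely three-dimensional and no larger than the final exponents permit, and so that the collinearity bound for $P$ is governed by exactly the stated hypotheses rather than by weaker inequalities that would be useless here. Tracking all the exponents through Rudnev's theorem and the subsequent Cauchy--Schwarz steps so that the powers $(8,2,8)$ and $(13,5,3,1)$ emerge is the other place where real care is needed.
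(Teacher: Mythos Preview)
Your lower bound on $\cN$ is fine, but the rest of the outline has a genuine gap: a single point--plane application to $\cN$ cannot deliver the exponents $(8,2,8)$ and $(13,5,3,1)$. A direct incidence bound of the type you describe yields an inequality with exponent $1$ on each of $K,L,M$, which after rearrangement recovers only the $6/5$ threshold that this theorem is meant to beat. You defer the entire difficulty to ``further Cauchy--Schwarz manipulations that convert the modest powers\ldots into the stated powers,'' but you propose no mechanism for this, and none exists within the framework you set up: the asymmetric exponents arise from a much more layered argument, not from post-processing a single Rudnev estimate. Likewise, the hypotheses $|C(A+1)||A|\le|C|^3$ and $|C(A+1)|^2\le|A||C|^3$ are not collinearity constraints for a point--plane problem; in the actual proof they are exactly the side conditions needed to apply the point--line energy bounds to $E_4^*(A,A)$.

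The paper does not use your quantity $\cN$ at all. It follows the multiplicative version of the $E_{4/3}$-energy pivot from \cite{ShShRu}: pass to a popular product set $P\subset AB$ and a popular $A'\subset A$, iterate (Lemma~\ref{4/3decomp}) to force $E^*_{4/3}(A')\gg E^*_{4/3}(A)$, pigeonhole a rich set $Q\subset A'/A'$, and count solutions to $a/a'=ab/(a'b)=ab'/(a'b')$ with all four products in $P$ and $a/a'\in Q$. Cauchy--Schwarz over equivalence classes splits this into $E_4^*(A)^{1/2}E_4^*(B)^{1/2}$ times a class count $|X|$; $|X|$ is then unfolded through further dyadic pigeonholing into energies $E_4^*(B,R_i)$ and $E^*(B,S_j)$. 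Each energy is bounded via the Stevens--de~Zeeuw point--line theorem (Theorems~\ref{E4} and~\ref{e2bound}), and the final cancellation $E^*_{4/3}(A')\gg E^*_{4/3}(A)$ is what produces the stated powers. None of this structure --- the $4/3$-energy, the iterative popularity lemma, the equivalence-class decomposition, or the repeated point--line energy bounds --- appears in your sketch, and it is precisely this machinery that lifts the exponent past $6/5$.
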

In our applications of this theorem we have $|A| = |B| = |C| = |D|$, so that the first three conditions are trivially satisfied. The conditions involving $p$ could likely be improved, however for sake of exposition we do not attempt to optimise these. The main proof closely follows \cite{ShShRu} (in the multiplicative setting), the central difference being a bound on multiplicative energies in terms of products of shifts. An application of Theorem \ref{shift} beats the threshold of $6/5$, matching the $11/9$ appearing in Theorem \ref{sp}. Specifically, we have
\begin{cor} \label{A(A+1)}
Let $A \subseteq \F$ be finite, with $|A| < p^{1/4}$. Then 
$$|A(A+1)| \gtrsim |A|^{11/9}, \qquad |AA| + |(A+1)(A+1)| \gtrsim |A|^{11/9}.$$
\end{cor}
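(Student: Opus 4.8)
The plan is to deduce both inequalities by applying Theorem~\ref{shift} to suitable quadruples built from the four sets $A$, $A+1$, $-A$, $-(A+1)$, each of which has cardinality exactly $|A|$. Since every quadruple I will use consists of four sets of equal size, the first three hypotheses of Theorem~\ref{shift} reduce to inequalities of the shape $|X(X\pm1)|\le|X|^2$, which hold trivially, and the characteristic hypothesis is inherited verbatim from $|A|<p^{1/4}$. So the only real content is choosing substitutions that force $|AB|$, $|C(A+1)|$ and $|D(B-1)|$ to coincide (up to sign, which does not affect cardinalities) with the product set we wish to bound; after that the corollary is pure bookkeeping with the exponents $8,2,8$ on the left and $13,5,3,1$ on the right.

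For the first inequality I would apply Theorem~\ref{shift} to the quadruple $(A,\,A+1,\,A,\,A+1)$. Then $AB=A(A+1)$, $C(A+1)=A(A+1)$, and $D(B-1)=(A+1)A$, so all three factors on the left equal $|A(A+1)|$; the conclusion becomes $|A(A+1)|^{8+2+8}\gtrsim|A|^{13+5+3+1}$, i.e. $|A(A+1)|^{18}\gtrsim|A|^{22}$, which rearranges to $|A(A+1)|\gtrsim|A|^{11/9}$.

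For the second inequality the obstacle is the asymmetry between the shift $+1$ in the term $C(A+1)$ and the shift $-1$ in the term $D(B-1)$: the naive choice $B=A$ would make the set $A-1$ appear, which is neither $A$ nor $A+1$. I would remove this by reflecting one variable, applying Theorem~\ref{shift} to the quadruple $(A,\,-A,\,A+1,\,A+1)$: now $AB=-AA$, $C(A+1)=(A+1)(A+1)$, and $D(B-1)=(A+1)\big(-(A+1)\big)=-(A+1)(A+1)$, so $|AB|=|AA|$ and $|C(A+1)|=|D(B-1)|=|(A+1)(A+1)|$. The conclusion then reads $|AA|^{8}\,|(A+1)(A+1)|^{10}\gtrsim|A|^{22}$, and bounding both left-hand factors by $M:=\max\{|AA|,\,|(A+1)(A+1)|\}$ gives $M^{18}\gtrsim|A|^{22}$, hence $|AA|+|(A+1)(A+1)|\ge M\gtrsim|A|^{11/9}$. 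The last thing to check is that these two quadruples really satisfy all hypotheses of Theorem~\ref{shift} — in particular $|B|\le|D|$ (which is $|{-A}|=|A+1|$ in the second case) and the two bounds $|C(A+1)||A|\le|C|^3$, $|C(A+1)|^2\le|A||C|^3$, both of which collapse to $|(A+1)(A+1)|\le|A|^2$ — but I expect no difficulty there; once the reflection trick is spotted the corollary is a formal consequence of Theorem~\ref{shift}.
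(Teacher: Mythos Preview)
Your proposal is correct and matches the paper's own proof exactly: the paper deduces the first inequality by taking $(A,B,C,D)=(A,A+1,A,A+1)$ and the second by the reflection trick $(A,B,C,D)=(A,-A,A+1,A+1)$, precisely the two substitutions you found. Your verification of the hypotheses of Theorem~\ref{shift} is in fact more detailed than what the paper spells out.
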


 \subsection{Acknowledgements}
 The author was supported by the Austrian Science Fund (FWF) Project P 30405-N32. The author would also like to thank Oliver Roche-Newton and Misha Rudnev for helpful conversations.
 \section{Preliminary Results}
 We require some preliminary theorems. The first is the point-line incidence theorem of Stevens and de Zeeuw.
 
 \begin{thm}[Stevens - de Zeeuw, \cite{SDZ}] \label{SD}
 Let $A$ and $B$ with $|A| \geq |B|$ be finite subsets of $\F$ a field, and let $L$ be a set of lines. Assuming $|L||B| \ll p^2$ and $|B||A|^2 \leq |L|^3$, we have
 $$I(A \times B, L) \ll |A|^{1/2}|B|^{3/4}|L|^{3/4} + |L|$$
 \end{thm}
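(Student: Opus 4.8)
\textbf{Proof proposal for Theorem \ref{SD} (Stevens--de Zeeuw).}

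The plan is to reduce the point-line incidence problem over $A\times B$ to the point-plane incidence theorem of Rudnev \cite{RuPoPl}. The starting observation is that a line $\ell$ in the plane, when restricted to incidences with the Cartesian product $A\times B$, can be encoded as a single point or a single plane in $\F^3$ in such a way that an incidence between a point $(a,b)\in A\times B$ and the line $\ell$ becomes an incidence between a point and a plane in three-dimensional space. Concretely, write a non-vertical line as $y = sx + t$; the incidence condition $b = sa + t$ is trilinear-looking in the data $(a,b)$ and $(s,t)$, and by introducing an auxiliary coordinate one turns the set of lines into a point set $Q\subset\F^3$ and the set $A\times B$ into a family of planes (or vice versa), so that $I(A\times B, L)$ is bounded by the number of point-plane incidences $I(Q,\Pi)$ with $|Q|\sim|L|$ and $|\Pi|\sim|A||B|$. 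Vertical lines contribute at most $|L|$ incidences and are set aside into the additive ``$+|L|$'' term.

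Next I would invoke Rudnev's bound: for a set of points $Q$ and a set of planes $\Pi$ in $\F^3$ with $|Q|\le|\Pi|$, provided $|Q|\ll p^2$ and no line contains too many points of $Q$, one has
\[
I(Q,\Pi) \ll |Q|^{1/2}|\Pi| + k|\Pi|,
\]
where $k$ bounds the number of collinear points; the non-degeneracy hypothesis coming from the Cartesian structure of $A\times B$ (together with $|A|\ge|B|$) is what controls $k$, and the condition $|B||A|^2 \le |L|^3$ is exactly what is needed to make the main term $|Q|^{1/2}|\Pi|$ dominate the secondary term after the dictionary is applied. The hypothesis $|L||B|\ll p^2$ translates into the characteristic condition $|Q|\ll p^2$ required by Rudnev's theorem.

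The main obstacle --- and the technical heart of the argument --- is setting up the point/plane dictionary correctly and verifying the non-degeneracy condition: one must check that after the change of variables no plane in $\Pi$ is overcounted and that the collinearity parameter $k$ really is controlled by $\min(|A|,|B|)=|B|$ (or by the relevant combinatorial quantity), since a naive encoding can produce many collinear points and destroy the bound. Once the dictionary is in place and the degenerate configurations (vertical lines, lines meeting $A\times B$ in too few points, coincidences forced by the product structure) are isolated, the stated inequality
\[
I(A\times B, L) \ll |A|^{1/2}|B|^{3/4}|L|^{3/4} + |L|
\]
follows by substituting $|Q|\sim|L|$, $|\Pi|\sim|A||B|$ into Rudnev's estimate and using $|B||A|^2\le|L|^3$ to balance the terms; I would present the degenerate-case bookkeeping first and the clean incidence substitution last.
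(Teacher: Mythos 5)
You should first note that the paper itself gives no proof of Theorem \ref{SD}: it is quoted as a black box from \cite{SDZ}, so the only meaningful comparison is with the argument of Stevens and de Zeeuw themselves. Your overall instinct is right --- their proof does run through Rudnev's point--plane incidence theorem --- but the dictionary you propose does not work, and the step that actually makes the theorem true is missing. If you encode the lines as a point set $Q$ with $|Q|\sim|L|$ and the points of $A\times B$ as planes with $|\Pi|\sim|A||B|$, Rudnev gives $I \ll |L|^{1/2}|A||B| + k|A||B|$, and $|L|^{1/2}|A||B| \le |A|^{1/2}|B|^{3/4}|L|^{3/4}$ would force $|A|^2|B|\le |L|$, which is far stronger than the hypothesis $|A|^2|B|\le|L|^3$; no choice of ``balancing'' recovers the asymmetric exponents $1/2,3/4,3/4$ from a one-shot encoding. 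A further sign the translation is off is the characteristic condition: with your dictionary it would read $|L|\ll p^2$, whereas the theorem requires $|L||B|\ll p^2$.

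The missing idea is a Cauchy--Schwarz step \emph{before} any point--plane encoding. Writing $I(A\times B,L)=\sum_{a\in A} n(a)$ with $n(a)=|\{(\ell,b)\in L\times B:(a,b)\in\ell\}|$, one gets $I^2\le |A|\sum_{a\in A} n(a)^2$, and $\sum_a n(a)^2$ counts quadruples $(\ell,\ell',b,b')\in L^2\times B^2$ whose incidence conditions $b=m_\ell a+c_\ell$, $b'=m_{\ell'}a+c_{\ell'}$ share a common abscissa $a$; eliminating $a$ gives the bilinear equation $m_{\ell'}(b-c_\ell)=m_\ell(b'-c_{\ell'})$, which is precisely an incidence between a point set and a plane set in $\F^3$ \emph{both} of size about $|L||B|$. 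This is where the hypothesis $|L||B|\ll p^2$ enters, Rudnev's main term $(|L||B|)^{1/2}\cdot|L||B|$ yields $\sum_a n(a)^2\ll |L|^{3/2}|B|^{3/2}$ and hence $I\ll |A|^{1/2}|B|^{3/4}|L|^{3/4}$, while the collinearity term $k|\Pi|$ together with the degenerate lines (vertical/horizontal, parallel or repeated pairs) is what the condition $|B||A|^2\le|L|^3$ and the additive $+|L|$ absorb. Without this Cauchy--Schwarz reduction your sketch cannot produce the stated bound, so as written it has a genuine gap.
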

 
Note that as $A$ is the larger of $A$ and $B$, we may swap the powers of $|A|$ and $|B|$ in this result. Before stating the next two theorems we require some definitions. For $x \in \F$ we define the \emph{representation function} 
$$r_{A/D}(x) = \left| \left\{ (a,d) \in A \times D : \frac{a}{d} = x \right\} \right|.$$
The set $A/D$ in this definition can be changed to any other combination of sets, changing the fraction $\frac{a}{d}$ in the definition to match. We also define the $n$'th moment \emph{multiplicative energy} of sets $A$, $D \subseteq \F$ as 
$$E_n^*(A,D) = \sum_x r_{A/D}(x)^n.$$

We use Theorem \ref{SD} to prove two further results. The first is a bound on the fourth order multiplicative energy relative to products of shifts.
 
 \begin{thm} \label{E4} For all finite non-empty $A,C,D \subset \F$ with $|A|^2|C(A+1)| \leq |D||C|^3$, $|A||C(A+1)|^2 \leq |D|^2|C|^3$, and $|A||C||D|^2 \ll p^2$, we have
 $$E_4^*(A,D) \lesssim \min \left\{  \frac{|C(A+1)|^2|D|^3}{|C|}, \frac{|C(A+1)|^3|D|^2}{|C|} \right\} $$
 \end{thm}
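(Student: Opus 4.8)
The plan is to re-express the (dyadically localised) energy as an incidence count between a Cartesian product and a family of lines, and then to invoke Theorem \ref{SD}. After deleting $0$ from $A$, $C$ and $D$ (which changes every quantity by at most a constant factor, and in particular makes $|A| \leq |C(A+1)|$), the starting point is the following reformulation. For a parameter $\tau \geq 1$ put $P_{\geq\tau} = \{x : r_{A/D}(x) \geq \tau\}$. If $x \in P_{\geq\tau}$ then there are at least $\tau$ pairs $(a,d) \in A \times D$ with $a = xd$; hence, for every $c \in C$, the line $\ell_{c,x} = \{(d,u) : u = c(1+xd)\}$ — which has $u$-intercept $c$ and slope $cx$ — contains the at least $\tau$ points $(d, c(a+1))$, all of which lie in the Cartesian product $\mathcal{P} := D \times C(A+1)$. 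Since $c$ together with the slope $cx$ recovers $(c,x)$, the assignment $(c,x) \mapsto \ell_{c,x}$ is injective, so $L_\tau := \{\ell_{c,x} : c \in C,\ x \in P_{\geq\tau}\}$ is a set of exactly $|C|\,|P_{\geq\tau}|$ lines, each incident to at least $\tau$ points of $\mathcal{P}$.

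First I would dyadically split $E_4^*(A,D) = \sum_x r_{A/D}(x)^4$, which (up to logarithmic factors absorbed by $\lesssim$) reduces the theorem to the estimate $\tau^4 |P_{\geq\tau}| \lesssim \max(|C(A+1)|,|D|)^2\min(|C(A+1)|,|D|)^3/|C|$ for each dyadic $1 \leq \tau \leq \min(|A|,|D|)$; note the right-hand side equals $\min\{|C(A+1)|^2|D|^3,\ |C(A+1)|^3|D|^2\}/|C|$. Feeding $\mathcal{P}$ (a Cartesian product, with $D$ and $C(A+1)$ in the two roles, ordered by size) and $L_\tau$ into Theorem \ref{SD}, the trivial bound $\tau |L_\tau| \leq I(\mathcal{P}, L_\tau)$ combined with the incidence estimate rearranges to $|L_\tau| \ll \max(|C(A+1)|,|D|)^2 \min(|C(A+1)|,|D|)^3/\tau^4$, and dividing by $|C|$ gives exactly the required bound on $\tau^4 |P_{\geq\tau}|$, uniformly in $\tau$. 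The three size hypotheses enter precisely here: the inequality $|L_\tau| \cdot \min(|C(A+1)|,|D|) \leq |A||C||D|^2 \ll p^2$ (valid since $|L_\tau| \leq |C|\,|A|\,|D|$) is the characteristic-dependent condition of Theorem \ref{SD}, while $|A|^2|C(A+1)| \leq |D||C|^3$ and $|A||C(A+1)|^2 \leq |D|^2|C|^3$ are what guarantee both that the main term of the incidence bound dominates its trivial term $|L_\tau|$ and that the theorem is applicable throughout the dyadic range.

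I expect the main obstacle to be exactly this last verification: checking, dyadic level by dyadic level, that Theorem \ref{SD} applies with its first term dominant, and separately treating the tail of large $\tau$, where $|P_{\geq\tau}|$ is small and the input condition of Theorem \ref{SD} may break down — there one falls back on the crude bounds $\tau \leq \min(|A|,|D|) \leq \min(|C(A+1)|,|D|)$ and $\tau|P_{\geq\tau}| \leq |A||D|$, which already push that contribution below the target. Two cosmetic points remain: a line $\ell_{c,x}$ may meet $\mathcal{P}$ in strictly more than its $\tau$ intended points, but this over-counting is harmless since only the lower bound on the richness is used; and one should check that the removal of $0$ (and the handling of any $a$ with $a+1 = 0$) does not disturb the product structure of $\mathcal{P}$, which is routine.
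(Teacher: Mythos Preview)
Your approach is correct and essentially the same as the paper's: dyadic localisation of $E_4^*$, the identity $c(a+1)=c+c\cdot\tfrac{a}{d}\cdot d$ rewritten as an incidence between a Cartesian product involving $C(A+1)$ and a family of lines, and then Theorem~\ref{SD}. The one packaging difference is that you put the rich level set on the \emph{line} side with point set $D\times C(A+1)$, so the $+|L|$ term forces $\tau\ll 1$ and a single application of Theorem~\ref{SD} (choosing the smaller Cartesian factor to carry the exponent $3/4$) yields the minimum of the two bounds at once; the paper instead runs two separate configurations---one with the rich set among the points and one with it among the lines---and handles the secondary-term case for each with a longer case analysis.
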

 The second result is similar, but for the second moment multiplicative energy.
 \begin{thm}\label{e2bound}
 For all finite and non-empty $A$, $C$, $D \subset \F$  with $|A|^2|C(A+1)| \leq |D||C|^3$,  $|A||C(A+1)|^2 \leq |D|^2|C|^3$, and $|A||C||D|\min\{|C|,|D|\} \ll p^2$, we have
 $$E^*(A,D) \lesssim \frac{|C(A+1)|^{3/2}|D|^{3/2}}{|C|^{1/2}}.$$

 \end{thm}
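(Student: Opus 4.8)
The plan is to mirror the proof of Theorem~\ref{E4} — the multiplicative transcription of the incidence argument of \cite{ShShRu} — with the fourth moment of the quotient representation function replaced by the second. I would expand $E^*(A,D)=\sum_x r_{A/D}(x)^2$, dyadically decompose into level sets $R_j=\{x:2^{j}\le r_{A/D}(x)<2^{j+1}\}$, and bound $2^{2j}|R_j|$ for each $j$, the sum over the $O(\log|A|)$ scales costing only the $\lesssim$.

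The device that brings in the dilation set $C$ and the unit shift is the identity: if $a=xd$ with $x\in R_j$ then $c(a+1)=(cx)\,d+c$ for every $c\in C$, so the point $\bigl(c(a+1),d\bigr)$ — which lies in the Cartesian product $C(A+1)\times D$ — is incident to the line $U=(cx)V+c$. Each pair $(x,c)\in R_j\times C$ gives a distinct line (slope and intercept recover $(cx,c)$, hence $(x,c)$) carrying $r_{A/D}(x)\gtrsim 2^{j}$ such points, so there are $\gtrsim 2^{j}|R_j||C|$ incidences between this point set and a family of $|R_j||C|$ lines. Feeding this into Theorem~\ref{SD}, with the point set inside $C(A+1)\times D$ (or inside $C(A+1)\times C$ when $|C|<|D|$, which is why the characteristic hypothesis involves $\min\{|C|,|D|\}$), produces a power-saving bound on $|R_j|$; the hypotheses $|A|^{2}|C(A+1)|\le|D||C|^{3}$ and $|A||C(A+1)|^{2}\le|D|^{2}|C|^{3}$ are what is needed to verify the non-degeneracy inequality of Theorem~\ref{SD} over the relevant range of $j$, and $|A||C||D|\min\{|C|,|D|\}\ll p^2$ covers its characteristic condition. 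Combining the resulting bound on $|R_j|$ with the trivial $|R_j|\le|A||D|2^{-j}$ and $|R_j|\le|A/D|$ and summing the dyadic pieces, balanced at the crossover scale, gives the claimed estimate.

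The step I expect to be the real obstacle — and the reason this is not a formal consequence of Theorem~\ref{E4} — is forcing the exponents to land exactly on $|C(A+1)|^{3/2}|D|^{3/2}|C|^{-1/2}$. The bound matters precisely when $|C(A+1)|$ is much smaller than $|A||D|$, and there the naive balancing that already suffices for the fourth moment is lossy: one must choose the point set, line family and weighting so that Theorem~\ref{SD} is applied where it is sharp rather than wasteful, and then dispose carefully of the degenerate term $|\mathcal{L}|$ from Theorem~\ref{SD} and of the low-multiplicity scales. Checking the hypotheses of Theorem~\ref{SD} uniformly in $j$ is the remaining bookkeeping.
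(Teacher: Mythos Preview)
Your overall plan is right, but the incidence configuration you describe is the wrong one for the second moment and will not give the stated bound. You put the popular level set $R_j$ on the \emph{line} side (lines indexed by $(x,c)\in R_j\times C$) and $D$ on the \emph{point} side (points in $C(A+1)\times D$). With that choice, Theorem~\ref{SD} yields
\[
2^{j}|R_j||C|\ \ll\ |C(A+1)|^{\alpha}|D|^{\beta}\bigl(|R_j||C|\bigr)^{3/4},
\]
so after rearranging you control $2^{4j}|R_j|$, i.e.\ exactly the fourth-moment contribution of Theorem~\ref{E4}. No choice of $\alpha,\beta\in\{1/2,3/4\}$ changes the exponent of $|R_j|$ coming from the line term, and the balancing you propose against the trivial bound $\sum_j 2^{j}|R_j|\le |A||D|$ gives at best $E^{*}(A,D)\lesssim |A|^{2/3}|C(A+1)||D|^{4/3}|C|^{-1/3}$, which is strictly weaker than $|C(A+1)|^{3/2}|D|^{3/2}|C|^{-1/2}$ in the regime of interest (take for instance $A=C=D$ with $|C(A+1)|$ close to $|A|$).

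The paper's proof uses the dual configuration: the popular set $S_{\tau}$ is placed as a \emph{coordinate of the point set}, $P=C(A+1)\times S_{\tau}$, while the lines are indexed by $D\times C$. Then Theorem~\ref{SD} can be applied with the exponent $1/2$ on $|S_{\tau}|$, producing
\[
|S_{\tau}|\tau|C|\ \ll\ |S_{\tau}|^{1/2}|C(A+1)|^{3/4}|D|^{3/4}|C|^{3/4}+|D||C|,
\]
which rearranges directly to $|S_{\tau}|\tau^{2}\lesssim |C(A+1)|^{3/2}|D|^{3/2}|C|^{-1/2}$. This also explains the form of the characteristic hypothesis: the condition $|L|\cdot|\text{smaller point coordinate}|\ll p^{2}$ becomes $|D||C|\min\{|S_{\tau}|,|C(A+1)|\}\ll p^{2}$, and bounding $|S_{\tau}|\le|A||D|$, $|C(A+1)|\le|A||C|$ gives precisely $|A||C||D|\min\{|C|,|D|\}\ll p^{2}$. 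Your parenthetical about a point set in $C(A+1)\times C$ does not correspond to any valid incidence setup here. The fix is simply to swap the roles of $D$ and $R_j$ in your construction; once that is done the remainder of your outline (pigeonholing to one scale, handling the secondary term $|D||C|$) goes through essentially as in the proof of Theorem~\ref{E4}.
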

 The $A+1$ appearing in these theorems can be changed to any $A+
\lambda$ for $\lambda \neq 0$, by noting that $|C(A+1)| = |C(\lambda A + \lambda)|$ and renaming $A' = \lambda A$. For our purposes, we will use $\lambda = \pm 1$. 
 \begin{proof}[Proof of Theorem \ref{E4}]
WLOG we can assume that $0 \notin A,C,D$. We begin by proving that 
 $$E_4^*(A,D) \lesssim  \frac{|C(A+1)|^2|D|^3}{|C|}. $$
 Define the set
 $$S_{\tau} := \{ x \in A/D :\tau \leq r_{A/D}(x) < 2\tau \}.$$
 By a dyadic decomposition, there is some $\tau$ with 
 $$|S_{\tau}|\tau^4 \sim E_4^*(A,D).$$
 Take an element $t \in S_{\tau}$. It has $\tau$ representations in $A/D$, so there are $\tau$ ways to write $t = a/d$ with $a \in A$, $d \in D$. For all $c \in C$, we have
 \begin{align*}
     t &= \frac{a}{d} \\
     & = \frac{1}{d}\left(\frac{ac+c-c}{c}
\right) \\
& = \frac{1}{d} \left( \frac{p}{c} - 1\right)\end{align*}
where $p = c(a+1) \in C(A+1)$. This shows that we have $|S_{\tau}|\tau|C|$ incidences between the lines 
$$L = \{ l_{d,c}: d\in D, c \in C\}, \quad l_{d,c} \text{  given by  } y = \frac{1}{d}\left(\frac{x}{c}-1\right)$$
and the point set $P = C(A+1) \times S_{\tau}$. Under the conditions $|D||C|\min\{|S_{\tau}|,|C(A+1)| \} \ll p^2$ and $|S_{\tau}||C(A+1)| \max \{ |S_{\tau}|,|C(A+1)| \} \leq |D|^3|C|^3$, we have that 
$$|S_{\tau}|\tau |C| \leq I(P,L) \ll |C(A+1)|^{1/2}|S_{\tau}|^{3/4}|C|^{3/4}|D|^{3/4} + |D||C|.$$
The conditions are satisfied under the assumptions $|D||A||C|\min\{|D|,|C|\} \ll p^2$, $|A|^2|C(A+1)| \leq |D||C|^3$, and $|A||C(A+1)|^2 \leq |D|^2|C|^3$ . Assuming that the leading term is dominant, we have
$$|S_{\tau}|\tau^4 |C| \ll |C(A+1)|^{2}|D|^{3}$$
so that as $|S_{\tau}|\tau^4 \sim E_4^*(A,D)$, we have
$$E_4^*(A,D) \lesssim \frac{|C(A+1)|^2|D|^3}{|C|}.$$
We therefore assume the leading term is not dominant. Suppose $|D||C|$ is dominant, so that
\begin{equation}
     \label{1} |D||C| \gg |C(A+1)|^{1/2}|S_{\tau}|^{3/4}|C|^{3/4}|D|^{3/4}.\end{equation}
Raising to the power four and multiplying through by $\tau^{12}$ we get the bound
$$|D||C| \tau^{12} \gtrsim |C(A+1)|^2 E_4^*(A,D)^3 \implies  E_4^*(A,D) \lesssim \frac{|D|^{1/3}|C|^{1/3}\tau^4}{|C(A+1)|^{2/3}}.$$
We now assume that the result doesn't hold, that is
$$\frac{|C(A+1)|^2|D|^3}{|C|} \lesssim \frac{|D|^{1/3}|C|^{1/3}\tau^4}{|C(A+1)|^{2/3}}$$
which gives
$$ |D|^8|C|^4|A|^4\ll|D|^8|C(A+1)|^8 \lesssim \tau^{12} |C|^4 \ll |A|^{12} |C|^4$$
so that we have $|D| \lesssim |A|$. We return to equation \eqref{1} and simplify, to find 
\begin{align*}
    |A|^{1/4}|C|^{1/4} \gtrsim |D|^{1/4}|C|^{1/4} & \gg  |C(A+1)|^{1/2}|S_{\tau}|^{3/4} \geq |A|^{1/4}|C|^{1/4}|S_{\tau}|^{3/4}  
\end{align*}
so that $|S_{\tau}| \sim 1$. We then have
$$ |D||C| \gg |C(A+1)|^{1/2}|S_{\tau}|^{3/4}|C|^{3/4}|D|^{3/4} \sim  |C(A+1)|^{1/2}|C|^{3/4}|D|^{3/4} \gg |A|^{1/4}|C||D|^{3/4} \gtrsim |D||C|$$
so that the two terms are in fact balanced and the result follows.

Secondly, we prove that 
$$E_4^*(A,D) \lesssim  \frac{|C(A+1)|^3|D|^2}{|C|}. $$
To do this, we swap the roles of $D$ and $S_{\tau}$ from above. We define the line set and point set by
$$L = \{ l_{t,c}: t \in S_{\tau}, c \in C \}, \qquad P = C(A+1) \times D.$$
Any incidence from the previous point and line set remains an incidence for the new ones, via $t = \frac{1}{d}\left(\frac{p}{c}-1\right) \iff d = \frac{1}{t}\left(\frac{p}{c}-1\right)$. Under the conditions
\begin{equation} \label{condterm2}
    |S_{\tau}||C| \min\{|D|,|C(A+1)|\} \ll p^2, \qquad |D||C(A+1)|\max\{|D|, |C(A+1)| \} \leq |S_{\tau}|^3|C|^3
\end{equation}
we have
$$|S_{\tau}|\tau |C| \ll I(P,L) \ll |C(A+1)|^{3/4}|S_{\tau}|^{3/4}|C|^{3/4}|D|^{1/2} + |S_{\tau}||C|.$$
If the leading term dominates, the result follows from $|S_{\tau}|\tau^4 \sim E_4^*(A,D)
$. Assume the leading term is not dominant, that is,
$$|C(A+1)|^{3}|D|^{2} \ll |S_{\tau}||C|.$$ Then by using $|S_{\tau}| \ll |A||D|$, we have
$$|A||C|^2|D|^2 \leq |C(A+1)|^3 |D|^2 \ll |A||D||C|$$ so that $|C| \sim |D| \sim 1$ and the result is trivial. 

 We now check the conditions \eqref{condterm2} for using Theorem \ref{SD}. The first condition in \eqref{condterm2} is satisfied if $|A||C||D|^2 \ll p^2$, which is true under our assumptions. The second condition depends on $\max \{ |D|, |C(A+1)| \}$, which we assume is $|D|$ (if not the first term in Theorem \ref{E4} gives stronger information, which we have already proved). Assuming the second condition does not hold, we have
 $$|D|^2 |C(A+1)| \gg |S_{\tau}|^3 |C|^3.$$
 Multiplying by $\tau^{12}$ on both sides and bounding $\tau \ll |A|$, we get
 \begin{equation} \label{e4boundc2}
     E_4^*(A,D) \lesssim \frac{|A|^4|D|^{2/3}|C(A+1)|^{1/3}}{|C|}.
 \end{equation} 
 Assuming that the result does not hold, we have 
 $$\frac{|C(A+1)|^3|D|^2}{|C|} \lesssim  \frac{|A|^4|D|^{2/3}|C(A+1)|^{1/3}}{|C|}$$
 giving
 $$ |A|^8|D|^4 \ll |C(A+1)|^{8}|D|^{4} \lesssim |A|^{12}. $$
 So that $|D| \lesssim |A|$. In turn, this implies $|A| \gtrsim |D| \geq |C(A+1)| \gg |A|$, so that $|A| \sim |C(A+1)| \sim |D|$. Returning to equation \ref{e4boundc2}, this gives
 $$  E_4^*(A,D) \lesssim \frac{|A|^4|D|^{2/3}|C(A+1)|^{1/3}}{|C|} \sim \frac{|C(A+1)|^3|D|^2}{|C|}$$
 and the result is proved.
 \end{proof}
 
 \begin{proof}[Proof of Theorem \ref{e2bound}]
The proof follows similarly to that of Theorem \ref{E4}. We again define the lines and points
$$L = \{ l_{d,c}: d\in D, c \in C\}, \quad l_{d,c} \text{  given by  } y = \frac{1}{d}\left(\frac{x}{c}-1\right), \quad P = C(A+1) \times S_{\tau},$$
where in this case the set $S_{\tau}$ is rich with respect to $E^*(A,D)$, so that $|S_{\tau}|\tau^2 \sim E^*(A,D)$. With the conditions $|A||C||D|\min\{|D|,|C|\} \ll p^2$ and $|S_{\tau}||C(A+1)| \max \{ |S_{\tau}|,|C(A+1)| \} \leq |D|^3|C|^3$, (which are satisfied under our assumptions) we have by Theorem \ref{SD},
$$|S_{\tau}| \tau |C| \ll I(P,L) \ll |S_{\tau}|^{1/2}|C(A+1)|^{3/4}|D|^{3/4}|C|^{3/4} + |D||C|.$$
 If the leading term dominates, we have
 $$|S_{\tau}|\tau^2 \ll \frac{|C(A+1)|^{3/2} |D|^{3/2}}{|C|^{1/2}} $$
 and the result follows from $|S_{\tau}|\tau^2 \sim E^*(A,D)$. We therefore assume that the leading term does not dominate, that is,
 $$ |S_{\tau}|^{1/2}|C(A+1)|^{3/4}|D|^{3/4}|C|^{3/4} \ll |D||C|. $$
 Multiplying through by $\tau$ and squaring, we get the bound
 $$ E^*(A,D) \lesssim \frac{|D|^{1/2}|C|^{1/2}\tau^2}{|C(A+1)|^{3/2}}.$$
 Assuming the result does not hold, we have
 $$\frac{|D|^{3/2}|C(A+1)|^{3/2}}{|C|^{1/2}} \lesssim \frac{|D|^{1/2}|C|^{1/2}\tau^2}{|C(A+1)|^{3/2}} \implies |D||C(A+1)|^3 \lesssim |C| \tau^2.$$
 Bounding $\tau \leq |A|$ and $|C||A|^2 \ll |C(A+1)|^3$ we have $|D| \sim 1$. Similarly, bounding $\tau^2 \leq |A||D|$ and $|C(A+1)|^3 \geq |C|^2|A|$, we find $|C| \lesssim 1$, so that the result is trivial.
\end{proof}

\section{Proof of Theorem \ref{shift}}

 We follow a multiplicative analogue of the argument in \cite{ShShRu}. For $A$ and $B$ finite subsets of $\F$, define a popular set of products
$$P := \left\{ x \in AB : r_{AB}(x) \geq \frac{|A||B|}{\log |A||AB|} \right\}.$$
Note that by writing
$$| \{ (a,b) \in A \times B : ab \in P \} | + | \{ (a,b) \in A \times B : ab \notin P \}  | = |A||B| $$
and noting that $$| \{ (a,b) \in A \times B : ab \notin P \}|  = \sum_{x \notin P} r_{AB}(x) < |P^c|\frac{|A||B|}{\log |A||AB|} \leq \frac{|A||B|}{\log|A|}$$
we have $$| \{ (a,b) \in A \times B : ab \in P \} | \geq  \left( 1-\frac{1}{\log|A|} \right) |A||B|.$$ We also define a popular subset of $A$ with respect to $P$, as
$$A' := \left\{ a \in A : |\{ b \in B : ab \in P \}| \geq \frac{2}{3} |B| \right\}.$$
We have
$$ | \{ (a,b) \in A \times B : ab \in P \} | = \sum_{a \in A'}| \{ b : ab \in P \}| + \sum_{a \in A \setminus A'}| \{ b : ab \in P \} |  \geq  \left( 1-\frac{1}{\log|A|} \right)|A||B|$$
Suppose that $|A \setminus A'| = c |A|$ for some $c\geq0$, so that also $|A'| = (1-c)|A|$. Noting that 
$$\sum_{a \in A'}| \{ b : ab \in P \}|  \leq (1-c) |A| |B|, \qquad \sum_{a \in A \setminus A'}| \{ b : ab \in P \} | \leq \frac{2c}{3}|A||B|, $$ we have 
$$ (1-c) |A| |B| + \frac{2c}{3}|A||B| \geq (1-\frac{1}{\log|A|})|A||B|  \implies c < \frac{3}{\log|A|},$$
so that $|A'| \geq \left(1 - \frac{3}{\log |A|}  \right)|A|$.

We use a multiplicative version of Lemma 8 in \cite{ShShRu}. The proof we present is an expanded version of the proof present in \cite{ShShRu}. 

\begin{lem} \label{4/3decomp}
For all $A \subset \F$, there exists $A_1 \subseteq A$ with $|A_1| \gg |A|$, such that 
$$E^*_{4/3}(A_1') \gg E^*_{4/3}(A_1) $$ 
\end{lem}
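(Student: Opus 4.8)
The plan is to iterate the passage to the popular subset, $X\mapsto X'$, and to reach a contradiction by playing two competing effects against each other. We may assume $0\notin A$ (this is harmless), so that $0\notin X'$ whenever $0\notin X$ and hence $\sum_x r_{X/X}(x)=|X|^2$ for every set that arises below. First, the computation carried out immediately before the lemma, applied with any finite $X\subset\F$ in place of $A$, gives $|X'|\ge\bigl(1-\tfrac3{\log|X|}\bigr)|X|$: a single step shrinks the set only by a factor $1-O(1/\log|A|)$. Second, there are the elementary bounds $|X|^2\le E^*_{4/3}(X)\le|X|^{7/3}$, the lower bound because $r_{X/X}(x)^{4/3}\ge r_{X/X}(x)$ on the support of $r_{X/X}$, so $E^*_{4/3}(X)\ge\sum_x r_{X/X}(x)=|X|^2$, and the upper bound because $r_{X/X}(x)\le|X|$. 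Thus along the first $\Omega(\log|A|)$ iterates the set stays within a constant factor of $|A|$, whereas a constant-factor drop of $E^*_{4/3}$ at each such step would push the energy below its floor of order $|A|^2$ after only $O(\log|A|)$ steps; with the implied constants chosen correctly the size window outlasts the energy budget, and this is exactly what proves the lemma.

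Concretely, I would fix a small absolute constant $c_0\in(0,1)$, read ``$|A_1|\gg|A|$'' as $|A_1|\ge c_0|A|$, and assume for contradiction that \emph{no} $A_1\subseteq A$ with $|A_1|\ge c_0|A|$ satisfies $E^*_{4/3}(A_1')\ge\tfrac12 E^*_{4/3}(A_1)$. Put $A_0:=A$ and $A_{j+1}:=A_j'$. There is a least index $j^*\ge1$ with $|A_{j^*}|<c_0|A|$: otherwise every $A_j$ has $|A_j|\ge c_0|A|$, so the contradiction hypothesis gives $E^*_{4/3}(A_{j+1})<\tfrac12 E^*_{4/3}(A_j)$ for all $j$, whence $E^*_{4/3}(A_j)\to0$, contradicting $E^*_{4/3}(A_j)\ge|A_j|^2\ge c_0^2|A|^2$. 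For $j<j^*$ we have $|A_j|\ge c_0|A|$, hence $\log|A_j|\ge\tfrac12\log|A|$ once $|A|$ exceeds an absolute constant, hence $|A_{j+1}|\ge\bigl(1-\tfrac6{\log|A|}\bigr)|A_j|$; telescoping (using $1-t\ge e^{-2t}$) yields $|A_{j^*}|\ge|A|\exp\bigl(-12 j^*/\log|A|\bigr)$, and in particular $|A_{j^*}|\ge\tfrac12 c_0|A|$. Also, for $j<j^*$ the contradiction hypothesis gives the energy halving $E^*_{4/3}(A_{j+1})<\tfrac12 E^*_{4/3}(A_j)$, so
$$\tfrac14 c_0^2|A|^2\le|A_{j^*}|^2\le E^*_{4/3}(A_{j^*})<2^{-j^*}E^*_{4/3}(A)\le2^{-j^*}|A|^{7/3},$$
which forces $j^*<\log_2(4/c_0^2)+\tfrac13\log_2|A|$.

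Substituting this bound on $j^*$ back into $|A_{j^*}|\ge|A|\exp(-12 j^*/\log|A|)$ shows $|A_{j^*}|\ge|A|\,e^{-6}$ once $|A|$ is larger than an absolute constant depending only on $c_0$; since $|A_{j^*}|<c_0|A|$, taking $c_0$ small enough — $c_0=e^{-7}$ will do — produces the contradiction $e^{-6}|A|\le|A_{j^*}|<e^{-7}|A|$. Hence the required $A_1$ exists; for $|A|$ below the absolute threshold invoked here one may simply take $A_1=A$. I expect the real obstacle to be precisely this numerical balancing: one must make sure that the number of iterates over which $|A_j|$ remains $\gg|A|$ genuinely exceeds the number of constant-factor drops that $E^*_{4/3}$ can absorb, which is what forces $c_0$ to be small and $|A|$ large and makes the bookkeeping — rather than any new idea — the substance of the argument; this is presumably the content compressed into the brief proof of \cite{ShShRu} that the lemma says we are expanding here. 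Beyond this and the two elementary estimates on $E^*_{4/3}$, nothing further should be needed.
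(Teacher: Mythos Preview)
Your proposal is correct and is essentially the same iterative argument the paper gives: repeatedly pass to the popular subset, use $|X'|\ge(1-3/\log|X|)|X|$ to keep the size within a constant factor of $|A|$ for $\gtrsim\log|A|$ steps, and observe that a constant-factor drop in $E^*_{4/3}$ at every step would push it below the trivial lower bound. The paper's write-up is terser (it fixes the decay factor at $1/4$, runs for $\log|A|$ steps, and uses only $E^*_{4/3}(X)\ge|X|^{4/3}$), while you track the first exit time $j^*$ and balance constants explicitly; the substance is the same.
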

\begin{proof}
We give an algorithm which shows such a subset exists, as otherwise we have a contradiction. We recursively define 
$$A_i = A_{i-1}', \qquad A_0 = A, \qquad i < \log |A|$$
where $A_i'$ is defined relative to $A_i$. Using the same arguments as above, we have
$|A_i'| \geq \left( 1- \frac{3}{\log|A|} \right)|A_i|$, so that $|A_i| \gg |A|$ for all $i$, by following the chain $|A_i| = |A_{i-1}'| \geq\left( 1- \frac{3}{\log|A|} \right)|A_{i-1}| \geq ... \geq \left( 1- \frac{3}{\log|A|} \right)^{\log|A|} |A_0| \geq \frac{|A|}{e^3}$. We assume that at all steps, we have
$$E_{4/3}^*(A_i') < \frac{E^*_{4/3}(A_i)}{4} $$
as otherwise we have $E_{4/3}^*(A_i') \gg E^*_{4/3}(A_i) $ and we are done. After $\log|A|$ steps, we have a set $A_k$ with
$$|A_k| \gg |A|, \qquad E_{4/3}^*(A_k') < \frac{E^*_{4/3}(k)}{4} < \frac{E^*_{4/3}(A_{k-1})}{16} < ... < \frac{E^*_{4/3}(A)}{4^{\log|A|}}.$$
But then we have 
$$E^*_{4/3}(A) > E_{4/3}^*(A_k') 4^{\log |A|} \gg |A|^{4/3 + 2} = |A|^{10/3}$$
which is a contradiction. Therefore at some step we have an $A_i$ satisfying the lemma.\end{proof}

We apply this lemma at the outset, redefining the subset $A_i$ found by Lemma \ref{4/3decomp} as $A$ to ensure WLOG that we have
$$E^*_{4/3}(A') \gg E^*_{4/3}(A). $$ 
We pigeonhole the ratio set $A'/A'$ in relation to the energy $E_{4/3}^*(A')$ to find a set $Q \subset A'/A'$ with $|Q|\Delta^{4/3} \sim E_{4/3}^*(A')$ for some $\Delta > 0$.

We will bound the number of solutions to the trivial equation
\begin{equation} \label{treq}
    \frac{a}{a'} = \frac{ab}{a'b} = \frac{ab'}{a'b'}
\end{equation}
such that $(a,a',b,b') \in A^2 \times B^2$, $\frac{a}{a'} \in Q$, $ab,ab',a'b,a'b' \in P$. We have
$$N = \sum_{\substack{a,a' \in A' \\ a/a' \in Q}} | \{ b \in B : ab, a'b \in P \} |^2 $$
and we see that as for all $a \in A'$, $| \{ b \in B : ab \in P \} | \geq \frac{2}{3}|B|$, by considering the intersection of $\{ b \in B : ab \in P \}$ and $\{ b \in B : a'b \in P \}$, we have that for all $a,a' \in A'$, $| \{ b \in B : ab, a'b \in P \} | \geq \frac{1}{3}|B|$. Therefore $N \geq \frac{1}{9}|B|^2|Q|\Delta$.

Define an equivalence relation on $A^2 \times B^2$ via 
$$(a,a',b,b') \sim (c,c',d,d') \iff \exists \ \lambda \text{ s.t.} \ a = \lambda c, a' = \lambda c', b = \frac{d}{\lambda}, b' = \frac{d'}{\lambda}. $$
Note that the conditions $\frac{a}{a'} \in Q$, $ab,a'b,ab',a'b' \in P$ are invariant in the class (i.e. if one class element satisfies these conditions, then they all do). Call the number of equivalence classes satisfying these conditions $|X|$. Also note that any quadruple satisfying these conditions gives a solution to \eqref{treq}. We can therefore write the number of solutions $N$ as the sum over each equivalence class;
$$N = \sum_{\substack{[a,a',b,b'] \\ ab,a'b,ab',a'b' \in P \\ \frac{a}{a'} \in Q}} \left|[a,a',b,b']\right|.$$
 By Cauchy-Schwarz and completing the sum over all equivalence classes, we have
$$|Q|^2\Delta^2|B|^4 \ll N^2 \leq |X|  \sum_{[a,a',b,b']} \left|[a,a',b,b']\right|^2$$
We must now bound the two quantities on the right hand side of this equation. We first claim that 
\begin{equation} \label{equivenergy} \sum_{[a,a',b,b']} \left|[a,a',b,b']\right|^2 \leq \sum_x r_{A/A}(x)^2r_{B/B}(x)^2.\end{equation}
To see this, note that the left hand side counts pairs of elements of equivalence classes. Take any two elements $(a,a',b,b'), \ (c,c',d,d') \in A^2 \times B^2$ from the same equivalence class, so that we may write $(c,c',d,d')= (\lambda a,\lambda a', \frac{b}{\lambda},\frac{b'}{\lambda}) $. The $8$-tuple $(a,a',b,b',c,c',d,d')$ satisfies
$$\lambda = \frac{c}{a} = \frac{c'}{a'} = \frac{b}{d} = \frac{b'}{d'}$$
and thus contributes to the sum $\sum_x r_{A/A}(x)^2r_{B/B}(x)^2$. We also see that different pairs from equivalence classes give different $8$-tuples, and so the claim is proved. We use Cauchy-Schwarz on the right hand side of equation \ref{equivenergy} to transform it into a pair of fourth energies.
$$ \sum_x r_{A/A}(x)^2r_{B/B}(x)^2 \leq E_4^*(A)^{1/2}E_4^*(B)^{1/2}.$$
We use Theorem \ref{E4} to bound these energies. We bound via
$$E_4^*(A) \lesssim \frac{|C(A+1)|^2|A|^3}{|C|}, \quad E_4^*(B) \lesssim \frac{|D(B-1)|^2|B|^3}{|D|}$$ with conditions 
$$|C(A+1)||A| \leq |C|^3, \ |C(A+1)|^2 \leq |A||C|^3, \ |A|^3|C| \ll p^2$$$$ |D(B-1)||B| \leq |D|^3 ,\ |D(B-1)|^2 \leq |B||D|^3,\  |B|^3|D| \ll p^2$$ which are all satisfied under our assumptions. This gives us
$$ |Q|^2\Delta^2|B|^4 \lesssim  |X|  \frac{|C(A+1)||A|^{3/2}|D(B-1)||B|^{3/2}}{|C|^{1/2}|D|^{1/2}}.$$
We now bound $|X|$, the number of equivalence classes. Note that any $(a,a',b,b')$ a solution to equation \eqref{treq} with the relevant conditions as above transforms into a solution to the equation 
\begin{equation} \label{reducedeq}
    w = \frac{s}{t} = \frac{u}{v}
\end{equation}
with $w \in Q$, $s,t,u,v \in P$, by taking $w = \frac{a}{a'}$, $s = ab,t=a'b,u=ab', v=a'b'$. Note that taking two solutions $(a,a',b,b')$ and $(c,c',d,d')$ that are \emph{not} from the same equivalence class necessarily gives us two different solutions to equation \eqref{reducedeq} via the map above. Thus we have
\begin{align*}
    |X| & \leq \left| \left\{ (w,s,t,u,v) \in Q \times P^4 : w = \frac{s}{t} = \frac{u}{v} \right\} \right| \\
    & = \left| \left\{ (s,t,u,v) \in  P^4 : \frac{s}{t} = \frac{u}{v} \in Q \right\} \right|.
\end{align*} 
The popularity of $P$ allows us to bound this by
$$|X| \lesssim \frac{|AB|^4}{|A|^4|B|^4} \left| \left\{ (a_1,a_2,a_3,a_4,b_1,b_2,b_3,b_4) \in  A^4 \times B^4 : \frac{a_1b_1}{a_2b_2} = \frac{a_3b_3}{a_4b_4} \in Q \right\} \right|. $$ 
We dyadically pigeonhole the set $\frac{BA}{A}$ in relation to the number of solutions to $r/a = r'/a' \in Q$ with $r,r' \in \frac{BA}{A}$, $a,a' \in A$ to find popular subsets $R_1, R_2 \subseteq \frac{BA}{A}$ in terms of these solutions. Specifically, we have
\begin{align*}
    |X| &\lesssim \frac{|AB|^4}{|A|^4|B|^4} \sum_{i} \sum_{\substack{x \in \frac{AB}{A} \\ 2^i \leq r_{\frac{AB}{A}}(x) < 2^{i+1}}} r_{
    \frac{AB}{A}}(x)\left| \left\{ (a_3,a_4,b_1,b_3,b_4) \in  A^2 \times B^3 : \frac{x}{b_1} = \frac{a_3b_3}{a_4a_4} \in Q \right\} \right|
\end{align*}
to give us a $\Delta_1 > 0$ and an $R_1 \subseteq \frac{AB}{A}$ such that 
\begin{align*}
    |X| &\lesssim \frac{|AB|^4}{|A|^4|B|^4} \Delta_1 \left| \left\{ (r_1,a_3,a_4,b_2,b_3,b_4) \in  R_1 \times A^2 \times B^3 : \frac{r_1}{b_2} = \frac{a_3b_3}{a_4b_4} \in Q \right\} \right|.
\end{align*}
We perform a similar dyadic decomposition to get a $\Delta_1'>0$ and $R_2 \subseteq \frac{AB}{A}$ with 
\begin{align*}
    |X| &\lesssim \frac{|AB|^4}{|A|^4|B|^4} \Delta_1 \Delta_1' \left| \left\{ (r_1,r_2,b_2,b_4) \in  R_1 \times R_2 \times B^2 : \frac{r_1}{b_2} = \frac{r_2}{b_4} \in Q \right\} \right|.
\end{align*}
We use these decompositions to get the bound
\begin{align*}
    |X| & \lesssim   \frac{|AB|^4}{|A|^4|B|^4} \Delta_1 \Delta_1' \left| \left\{ (r_1,r_2,b_2,b_4) \in  R_1 \times R_2 \times B^2 : \frac{r_1}{b_2} = \frac{r_2}{b_4} \in Q \right\} \right| \\
     & \leq  \frac{|AB|^4}{|A|^4|B|^4} \Delta_1 \Delta_1' \sum_{q \in Q}r_{R_1/B}(q)r_{R_2/B}(q) \\
     &\leq  \frac{|AB|^4}{|A|^4|B|^4} \Delta_1 \Delta_1' \left( \sum_{q \in Q}r_{R_1/B}(q)^{2} \right)^{1/2} \left( \sum_{q \in Q}r_{R_2/B}(q)^{2} \right)^{1/2} \\
     & \leq \frac{|AB|^4}{|A|^4|B|^4} \Delta_1 \Delta_1' |Q|^{1/2} E_4^*(B, R_1)^{1/4}E_4^*(B, R_2)^{1/4}
\end{align*}
We will now show that given $|B||D||R_i|^2 \ll p^2$ and $|B| \leq |D|$ (which are true under our assumptions), we have
\begin{equation} \label{BRi}
    E_4^*(B,R_i) \lesssim \frac{|D(B-1)|^3|R_i|^2}{|D|}.
\end{equation} 
Firstly, with the additional conditions \begin{equation} \label{condE4}
    |B|^2|D(B-1)| \leq |R_i||D|^3, \quad |B||D(B-1)|^2 \leq |R_i|^2|D|^3
\end{equation} we may bound these fourth energies by Theorem \ref{E4} to get \eqref{BRi}. We can therefore assume one of these conditions does not hold.

Suppose that $ |B|^2 |D(B-1)| \geq |R_i||D|^3$. We have
$$
    E_4^*(B,R_i) \leq |R_i|^4|B|.
$$
Note that we want to have
$$E_4^*(B,R_i) \leq \frac{|D(B-1)|^3|R_i|^2}{|D|}$$ 
which would follow from
$$|R_i|^4 |B| \leq \frac{|D(B-1)|^3|R_i|^2}{|D|}$$
which is true if and only if $|R_i|^2 |B||D| \leq |D(B-1)|^3$. Using our assumed bound for $|R_i|$, we know that
$$|R_i|^2 |B||D| \leq \frac{|B|^5|D(B-1)|^2}{|D|^5}. $$ Noting that we certainly have 
$$|B| \leq |D| \implies \frac{|B|^5|D(B-1)|^2}{|D|^5} \leq |D(B-1)|^3$$ so that we must have
$$|R_i|^2 |B||D| \leq |D(B-1)|^3 $$ and so the bound on the fourth energy holds. 

Now assume the second condition from \eqref{condE4} does not hold, that is, $|B||D(B-1)|^2 \geq |R_i|^2|D|^3$. Again, we have
\begin{align*}
    E_4^*(B,R_i) &\leq |R_i|^4|B|. 
\end{align*}
We have 
$$ |R_i|^4|B| \leq \frac{|D(B-1)|^3|R_i|^2}{|D|} \iff |R_i|^2|B||D| \leq |D(B-1)|^3$$
so that it is enough to prove $|R_i|^2|B||D| \leq |D(B-1)|^3$, as before. Using the assumption to bound $|R_i|$, we have the information that
$$|R_i|^2 |B||D| \leq \frac{|B|^2|D(B-1)|^2}{|D|^2}$$
and it follows from our assumption $|B| \leq |D|$ that
$$ \frac{|B|^2|D(B-1)|^2}{|D|^2} \leq |D(B-1)|^3.$$
Therefore we have that $|R_i|^2|B||D| \leq |D(B-1)|^3$ and so the bound on the fourth energy holds. Plugging this in, we get
\begin{align*}
  |X| & \lesssim \frac{|AB|^4}{|A|^4|B|^4} \Delta_1 \Delta_1' |Q|^{1/2} E_4^*(B, R_1)^{1/4}E_4^*(B, R_2)^{1/4} \\
  & \lesssim \frac{ |Q|^{1/2} |AB|^4|D(B-1)|^{3/2}}{|A|^4|B|^4|D|^{1/2}}|R_1|^{1/2}|R_2|^{1/2} \Delta_1 \Delta_1'
\end{align*}
The product $|R_1|^{1/2}|R_2|^{1/2}\Delta_1 \Delta_1'$ can be bounded by
$$\left[|R_1|^{1/2}|R_2|^{1/2}\Delta_1 \Delta_1'\right]^2 \leq \sum_{x\in R_1}r_{\frac{BA}{A}}(x)^2 \sum_{x\in R_2}r_{\frac{BA}{A}}(x)^2 $$
where it is important to note that $r_{\frac{BA}{A}}(x)$ gives a triple $(b,a,a')$. For $i =1,2$, we have
\begin{align*}
\sum_{x\in R_i}r_{\frac{BA}{A}}(x)^2   &\leq \left| \left\{ (a_1,a_2,a_3,a_4,b_1,b_2) \in A^4 \times B^2 : \frac{b_1a_1}{a_2} = \frac{b_2a_3}{a_4} \right\} \right|.
\end{align*}
Following the process as before, we find a pair of subsets $S_1,S_2 \subseteq A/A$ with respect to these solutions, and some $\Delta_2, \Delta_2' >0$ with 
\begin{align*}
   \sum_{x\in R_i}r_{\frac{BA}{A}}(x)^2  
    & \lesssim \Delta_2 \Delta_2'  \left| \left\{ (s_1,s_2,b_1,b_2) \in S_1 \times S_2 \times B^2 : s_1b_1 = s_2b_2 \right\} \right| \\
    & \leq \Delta_2 \Delta_2' \sum_{x} r_{S_1B}(x) r_{S_2B}(x) \\ 
    & \leq \Delta_2 \Delta_2' E^*(B,S_1)^{1/2}E^*(B,S_2)^{1/2}.
\end{align*}
We will use a similar argument as above to prove that with the two conditions $|B||D||S_i|\min\{ |D|, |S_i| \} \ll p^2$ and $B\leq D$ (which are satisfied under our assumptions), we have
\begin{equation} \label{BSi}
    E^*(B,S_i) \lesssim \frac{|S_i|^{3/2}|D(B-1)|^{3/2}}{|D|^{1/2}}.
\end{equation}

Under the extra conditions 
\begin{equation} \label{condE2}
    |B|^2|D(B-1)| \leq |S_i||D|^3,\qquad |B||D(B-1)|^2 \leq |S_i|^2|D|^3
\end{equation} we can bound this energy by Theorem \ref{e2bound} to get \eqref{BSi}. We therefore assume the first condition from \eqref{condE2} does not hold, that is, $|B|^2|D(B-1)| \geq |S_i||D|^3$. We bound the energy via
\begin{align*}
E^*(B,S_i) &\leq |B| |S_i|^2.
\end{align*}
We wish to show that 
$$|B||S_i|^2 \leq \frac{|S_i|^{3/2}|D(B-1)|^{3/2}}{|D|^{1/2}}\ \ \ \text{  which is true iff  \ } \ |B||D|^{1/2}|S_i|^{1/2} \leq |D(B-1)|^{3/2}.$$
Using our assumption on $|S_i|$, we have that
$$|B||D|^{1/2}|S_i|^{1/2} \leq \frac{|B|^2|D(B-1)|^{1/2}}{|D|}. $$
Our assumption that $|B| \leq |D|$ gives
$$ \frac{|B|^2|D(B-1)|^{1/2}}{|D|} \leq |B||D(B-1)|^{1/2} \leq |D(B-1)|^{3/2}$$
so that $|B||D|^{1/2}|S_i|^{1/2} \leq |D(B-1)|^{3/2}$, and the bound \eqref{BSi} holds. Next we assume that the second condition in \eqref{condE2} does not hold, that is, $|B||D(B-1)|^2 \geq |S_i|^2|D|^3 $. We again have
$$E^*(B,S_i) \leq |B||S_i|^2.$$
Comparing this bound to our desired bound, we have
$$|B||S_i|^2 \leq \frac{|S_i|^{3/2}|D(B-1)|^{3/2}}{|D|^{1/2}} \iff |B||D|^{1/2}|S_i|^{1/2} \leq |D(B-1)|^{3/2}.$$
So that the bound we want follows from the second inequality above. Using our assumption on $|S_i|$, we know that
$$|B||D|^{1/2}|S_i|^{1/2} \leq  \frac{|B|^{5/4}|D(B-1)|^{1/2}}{|D|^{1/4}}$$
and by our assumption that $|B| \leq |D|$, we have
$$\frac{|B|^{5/4}|D(B-1)|^{1/2}}{|D|^{1/4}} \leq |D(B-1)|^{3/2}$$
so that we have $|B||D|^{1/2}|S_i|^{1/2} \leq |D(B-1)|^{3/2}$ as needed.

In all cases the bound on $E^*(B,S_i)$ holds, so that we find
\begin{align*}
 \left[|R_1|^{1/2}|R_2|^{1/2}\Delta_1 \Delta_1'\right]^2 & \lesssim \Delta_2^2 \Delta_2'^2 E^*(B,S_1)E^*(B,S_2)\\
   & \lesssim  \frac{\Delta_2^2 \Delta_2'^2|S_1|^{3/2}|S_2|^{3/2}|D(B-1)|^{3}}{|D|} \\
   & \leq \frac{E_{4/3}^*(A)^3|D(B-1)|^3}{|D|}.
\end{align*}

Putting all these bounds together, we have
$$ |Q|^{3/2}\Delta^2|B|^{13/2}|A|^{5/2}|C|^{3/2}|D|^{1/2} \lesssim  |AB|^4|C(A+1)||D(B-1)|^{4}E_{4/3}^*(A)^{3/2} $$
which simplifies to
$$E_{4/3}^*(A')^{3} |B|^{13} |A|^5|C|^3|D| \lesssim |AB|^8 |C(A+1)|^2 |D(B-1)|^8E_{4/3}^*(A)^{3}.$$
We use our assumption $E_{4/3}(A') \gg E_{4/3}(A)$ to conclude that 
$$|B|^{13} |A|^5|C|^3|D| \lesssim |AB|^8 |C(A+1)|^2 |D(B-1)|^8.$$
\qed

The first part of Corollary \ref{A(A+1)} can be seen by setting $B = A+1$, $C=A$ and $D = A+1$ to give
$$|A(A+1)| \gtrsim |A|^{11/9}.$$
Alternatively, setting $B = -A$, $D = C = A+1$ gives the second part,
$$|AA| + |(A+1)(A+1)| \gtrsim |A|^{11/9}.$$

\end{document}